\allowdisplaybreaks \numberwithin{equation}{section}
\begin{document}
\bibliographystyle{plain}
\doublespacing

\newcommand{\ve}[1]{\mbox{\boldmath$#1$}}
\newcommand{\be}{\begin{equation}}
\newcommand{\ee}{\end{equation}}
\newcommand{\bc}{\begin{center}}
\newcommand{\ec}{\end{center}}
\newcommand{\bal}{\begin{align*}}
\newcommand{\enal}{\end{align*}}
\newcommand{\al}{\alpha}
\newcommand{\bt}{\beta}
\newcommand{\gm}{\gamma}
\newcommand{\de}{\delta}
\newcommand{\la}{\lambda}
\newcommand{\om}{\omega}
\newcommand{\Om}{\Omega}
\newcommand{\Gm}{\Gamma}
\newcommand{\De}{\Delta}
\newcommand{\Th}{\Theta}
\newcommand{\nno}{\nonumber}
\newtheorem{theorem}{Theorem}[section]
\newtheorem{lemma}{Lemma}[section]
\newtheorem{assum}{Assumption}[section]
\newtheorem{claim}{Claim}[section]
\newtheorem{proposition}{Proposition}[section]
\newtheorem{corollary}{Corollary}[section]
\newtheorem{definition}{Definition}[section]
\newtheorem{remark}{Remark}[section]
\newenvironment{proof}[1][Proof]{\begin{trivlist}
\item[\hskip \labelsep {\bfseries #1}]}{\end{trivlist}}
\newenvironment{proofclaim}[1][Proof of Claim]{\begin{trivlist}
\item[\hskip \labelsep {\bfseries #1}]}{\end{trivlist}}

\def \qed {\hfill \vrule height7pt width 5pt depth 0pt}
\def\refhg{\hangindent=20pt\hangafter=1}
\def\refmark{\par\vskip 2.50mm\noindent\refhg}
\def\Bbb#1{\mbox{\sf #1}}

\title{\textbf{Wavelet transform of Fractal \\ Interpolation Function}}
\author{Srijanani Anurag Prasad}
\date{}
\maketitle \vspace{-1.5cm}
{\singlespacing \bc
Stat-Math unit,
Indian Statistical Institute, Delhi Centre,\\
7, S. J. S. Sansanwal Marg, New Delhi, India. \\ Post Code:110016\\
janani@isid.ac.in
\ec }

\doublespacing

\begin{abstract}
In the present paper, the wavelet transform of Fractal Interpolation Function (FIF) is studied. The wavelet transform of FIF is obtained through two different methods. The first method uses the functional equation through which FIF is constructed. By this method, it is shown that the FIF belongs to Lipschitz class of order $\delta, (0<\de \leq 1),$ under certain conditions on free parameters. The second method is via Fourier transform of FIF. This approach gives the $\la$-regularity, $(0<\la)$, of FIF under certain conditions on free parameters. Fourier transform of a FIF is also derived in this paper to facilitate the approach of wavelet transform of a FIF via Fourier transform.
\end{abstract}

{\bf Key Words :} Fractal, Interpolation Function, Wavelet transform, Fourier transform, Functional equation,

{\bf Mathematics Subject Classification:} Primary 28A80, 41A05
\newpage

\section{Introduction}
Fractal Interpolation Function (FIF) was introduced by Barnsley~\cite{barnsley86} using the theory of Iterated Function System (IFS). It is a new and novel method to construct irregular functions from interpolation data. The construction of FIF depends on a functional operator which was investigated by Read~\cite{read51} and Bajraktarevic~\cite{bajraktarevic56}. It is now known as Read-Bajraktarevic operator and it defines a functional equation on FIF. In~\cite{girgensohn93,girgensohn94}, the nowhere differentiable functions are studied as solution of functional equations. In this paper, the wavelet transform of FIF is studied through two different methods. The first method uses the functional equation of FIF to find  bounds on the wavelet transform of FIF. By this method, we show that the FIF belongs to Lipschitz class of order $\delta$ under certain conditions. The second method is via Fourier transform of FIF. This approach gives the $\la$-regularity, $(0<\la)$, of FIF under certain conditions on free parameters used in the construction of FIF.

The present paper is organized as follows: In Section~\ref{sec:constr}, a brief introduction on construction of FIF is given. For convenience, some points pertaining to notation and terminology are also noted in the same section. Following the approach of~\cite{feng05}, the Fourier transform of a general FIF is obtained in Section~\ref{sec:ft}.  The result given in~\cite{feng05} follows as a special case. Section~\ref{sec:wt} is devoted to Wavelet transform of FIF. It is divided in two subsections - Subsection~\ref{sec:wt1}, wherein the Wavelet transform of FIF is obtained using the recursive functional equation and Subsection~\ref{sec:wt2}, wherein the Wavelet transform of FIF is obtained using the Fourier transform of FIF.

\section{Construction of FIF}\label{sec:constr}
In this section, a brief introduction on the construction of a Fractal Interpolation Function (FIF) is given. This section is based on~\cite{barnsley86,barnsley88}.

Given an interpolation data $\{(x_i,y_i) \in \mathbb{R}^2 :
i=0,1,\ldots,N \}$, where $-\infty < x_0 < x_1 < \ldots < x_N <
\infty$, the interval $[x_0,x_N]$ is denoted by $I$ and the smaller intervals $[x_{k-1},x_k]$ are denoted by $I_k$ for $k=1,2,\ldots,N$.  The functions $ L_k : I \rightarrow I_k $ and $ F_k : I \times \mathbb{R} \rightarrow
\mathbb{R}  $ for $k=1,2,\ldots,N$ are   defined as
\begin{align}\label{eq:LkFk}
L_k(x) &= a_k x + b_k = \frac{x_k-x_{k-1}}{x_N-x_0} \ x +  \frac{x_N
x_{k-1}-x_0 x_k}{x_N-x_0} \nno \\
F_k(x,y)&= \gm_k y + q_k(x)
\end{align}
respectively. In~\eqref{eq:LkFk},  $\gm_k$  are free variables chosen  such that $|\gm_k|
< 1$ and $ q_k$  are continuous functions chosen such that the following conditions on $F_k$ are satisfied:
\begin{align} \label{eq:Fncond}
|F_k(x,y)-F_k(\bar{x},\bar{y})| &\leq s d_M((x,y),(\bar{x},\bar{y})) , \ s < 1\nno \\
F_k(x_0,y_0)=y_{k-1} \quad &\mbox{and} \quad
F_k(x_N,y_N)=y_{k}
\end{align}
where, $d_M$ is a suitable metric equivalent to Euclidean metric in $\mathbb{R}^2$. Finally, the functions $ \om_k: I \times \mathbb{R} \rightarrow I \times \mathbb{R}$ are  defined as
\begin{equation}\label{eq:omk}
\om_k(x,y)=(L_k(x),F_k(x,y)).
\end{equation}
It is shown in~\cite{barnsley86,barnsley88} that $\om_k$ defined by~\eqref{eq:omk} are contraction maps with respect to a metric equivalent to Euclidean metric. Consequently,
\begin{align}\label{eq:ifs}
\{I \times \mathbb{R}; \om_k , k=1,2,\ldots, N \}
\end{align}
is a hyperbolic Iterated Function System (IFS) and therefore there exists an attractor $ A$ in $H(\mathbb{R}^2)$, the space of compact sets in $\mathbb{R}^2$, such that $ A = \bigcup\limits_{k=1}^N \om_k(A)$, where $\om_k(A)= \{\om_k(x,y) :\ (x,y) \in A \}$. It is proved in~\cite{barnsley86}, that attractor A of IFS~\eqref{eq:ifs} is graph of a continuous function $\ f : I \rightarrow
\mathbb{R} $ which interpolates the given data $ \{(x_i,y_i) : i=0,1,
\ldots ,N \} $,  i.e. $ A = \{ (x,f(x)) : x \in I \} $ and  $\
f(x_i)=y_i $ for $i=0,1, \ldots ,N$. The Fractal Interpolation Function (FIF) is thus defined as:

\begin{definition}(c.f.~\cite{barnsley86,barnsley88})
The \textbf{Fractal Interpolation Function (FIF)} for the
interpolation data $ \{(x_i,y_i) : i = 0,1,\ldots,N \}$ is defined
as the continuous function $\ f : I \rightarrow \mathbb{R} $, whose
graph is the attractor of IFS $ \{I \times \mathbb{R};\om_k,
k=1,2,\ldots,N \}$, where $\om_k$ are defined by~\eqref{eq:omk}.
\end{definition}


It is shown in~\cite{barnsley86,barnsley88} that the FIF $f$ is a fixed point of Read-Bajraktarevic operator $T$ defined by,
\begin{align*}
T(g)(x)  = F_k(L_k^{-1}(x),g(L_k^{-1}(x))), \ x \in I_k
\end{align*}
on the space $ ({\cal G}, d_{{\cal G}})$, where the set ${\cal G}$ is defined by $ {\cal G} = \{ g: \  g : I \rightarrow \mathbb{R} \ \mbox{is continuous},\\  g(x_0) = y_0 \ \mbox{and} \ g(x_N) = y_N\}$ and the maximum metric $d_{{\cal G}}$  is
given by $\\ d_{{\cal G}} (g,\hat{g}) = \max\limits_{x \in I } |g(x)~-~\hat{g}(x)|$,
$\ g,\hat{g} \in {\cal G}$. Hence, FIF $f$ satisfies
the functional equation
\begin{align}\label{eq:funceqn}
f(x) = \gm_k f(L_k^{-1}(x)) + q_k(L_k^{-1}(x)), \ x \in I_k  \ \mbox{and} \  k=1,2,\ldots,N.
\end{align}

For convenience, we assume that  $x_0=0, x_N=1$, $x_k -x_{k-1} =\frac{1}{N}$ for $k=1,\ldots,N$ and $y_0=y_N=0$ in the rest of our paper. So,
\begin{align}\label{eq:akbk}
a_k = \frac{1}{N} \quad \mbox{and} \quad b_k = \frac{k-1}{N}.
\end{align}
Also, for notational convenience, the value of an empty product is taken to be unity and the value of an empty sum is taken to be zero.

\section{Fourier Transform of a FIF}\label{sec:ft}

In this section, the Fourier transform of a FIF is obtained for a general $q_k$. The result given in~\cite{feng05} follows as a special case.

Let $f$ be a FIF obtained from an interpolation data. Extend $f$ to $\mathbb{R}$ by defining $f(x) = 0 $ if $ x \not \in I$ and hence $f \in L_1(\mathbb{R})$. The Fourier transform of $f$ is given by the following theorem:

\begin{theorem}
Let $f$ be a FIF obtained from the interpolation data $\{(x_i,y_i) \in \mathbb{R}^2 : i=0,1,\ldots,N \}$, where $0=x_0<x_1<\ldots<x_N=1$, $x_k -x_{k-1} =\frac{1}{N} $ for $k=1,2,\ldots,N$ and $y_0=y_N=0$. Extend $f$ to $\mathbb{R}$ by defining $f(x) = 0 $ if $ x \not \in I =[x_0,x_N]$. The Fourier transform of $f$ is given by
\begin{align}\label{eq:ftf}
\hat{f}(\om )
& =  \sum\limits_{j=1}^{\infty} \frac{1}{N^j} \sum\limits_{k_1,k_2,\ldots k_j=1}^N \gm_{k_1} \gm_{k_2}\ldots \gm_{k_{j-1}} e^{-i \om p_{k_1,k_2,\ldots,k_j}}  \int\limits_{I}  q_{k_j}(x) e^{-i \om \left(\frac{x}{N^{j}}\right)}  dx
\end{align}
where, $p_{k_1,k_2,\ldots k_j}=\frac{k_1-1}{N}+\frac{k_2-1}{N^2}+ \ldots + \frac{k_j-1}{N^j}$.
\end{theorem}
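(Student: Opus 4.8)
The plan is to convert the Read--Bajraktarevic functional equation~\eqref{eq:funceqn} into a self-similar recursion for $\hat f$ and then unfold it. Since $f$ is supported on $I=[0,1]$, I would begin from $\hat f(\om)=\int_0^1 f(x)e^{-i\om x}\,dx$ and split the integral over the subintervals $I_k=[\tfrac{k-1}{N},\tfrac{k}{N}]$. On each $I_k$ I substitute~\eqref{eq:funceqn} and make the change of variable $u=L_k^{-1}(x)$, i.e. $x=L_k(u)=\tfrac{u}{N}+\tfrac{k-1}{N}$ with $dx=\tfrac{du}{N}$ and $u$ ranging over $[0,1]$. Pulling out the factor $e^{-i\om(k-1)/N}$ and observing that $\int_0^1 f(u)e^{-i\om u/N}\,du=\hat f(\om/N)$ (again because $f$ vanishes off $I$), this produces the recursion
\[
\hat f(\om)=G(\om)\,\hat f(\om/N)+Q(\om),
\]
where $G(\om)=\tfrac1N\sum_{k=1}^N\gm_k e^{-i\om(k-1)/N}$ and $Q(\om)=\tfrac1N\sum_{k=1}^N e^{-i\om(k-1)/N}\int_I q_k(u)e^{-i\om u/N}\,du$.

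Next I would iterate this identity $m$ times to get
\[
\hat f(\om)=\sum_{j=0}^{m-1}\Big(\prod_{l=0}^{j-1}G(\om/N^l)\Big)Q(\om/N^j)+\Big(\prod_{l=0}^{m-1}G(\om/N^l)\Big)\hat f(\om/N^m),
\]
provable by an easy induction. Because $|\gm_k|<1$ for every $k$, the multiplier satisfies the uniform bound $|G(\om)|\le\rho$ with $\rho:=\tfrac1N\sum_{k}|\gm_k|<1$; since $\hat f$ is bounded by $\|f\|_{L_1}$ and $Q$ is bounded by $\tfrac1N\sum_k\|q_k\|_{L_1}$, the remainder term is $O(\rho^m)$ and the series converges absolutely. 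Letting $m\to\infty$ gives $\hat f(\om)=\sum_{j=0}^{\infty}\big(\prod_{l=0}^{j-1}G(\om/N^l)\big)Q(\om/N^j)$.

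The last step is to expand the products. Writing each factor $G(\om/N^l)$ with its own summation index $k_{l+1}$ and $Q(\om/N^j)$ with index $k_{j+1}$, the product of the $j$ copies of $G$ with the $Q$-factor becomes a sum over $(k_1,\dots,k_{j+1})\in\{1,\dots,N\}^{j+1}$ whose summand carries the coefficient $\gm_{k_1}\cdots\gm_{k_j}$, the phase $e^{-i\om(\frac{k_1-1}{N}+\cdots+\frac{k_j-1}{N^j}+\frac{k_{j+1}-1}{N^{j+1}})}$, the prefactor $N^{-(j+1)}$, and the integral $\int_I q_{k_{j+1}}(u)e^{-i\om u/N^{j+1}}\,du$. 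Re-indexing the outer sum by $j\mapsto j-1$ (so it now runs from $1$ to $\infty$) turns this into precisely~\eqref{eq:ftf} with $p_{k_1,\dots,k_j}$ as defined there; the convention that an empty product equals $1$ takes care of the $j=1$ term, which is just $Q(\om)$.

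I expect the only genuine work to lie in the bookkeeping of this final expansion — keeping the phases $p_{k_1,\dots,k_j}$ synchronized with the Fourier dilation $\om/N^j$ and with the unit shift in the summation index — together with the routine uniform estimates needed to justify the passage $m\to\infty$. Everything else follows directly from the functional equation and the explicit scaling $L_k^{-1}(x)=Nx-(k-1)$ coming from~\eqref{eq:akbk}.
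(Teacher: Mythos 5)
Your proposal is correct and follows essentially the same route as the paper: derive the one-step recursion for $\hat f$ from the functional equation~\eqref{eq:funceqn} via the substitution $x=L_k(u)$, iterate it by induction, and kill the remainder term as the iteration depth tends to infinity (your $G$--$Q$ packaging and final product expansion is just a compact rewriting of the paper's multi-index formula~\eqref{eq:temp2}). You in fact supply slightly more detail than the paper on why the remainder vanishes, a point the paper delegates to the argument of~\cite{feng05}.
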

\begin{proof}
The Fourier transform of $f$ is
\begin{align*}
\hat{f}(\om) &= \int_I f(x) e^{-i \om x} dx
= \sum\limits_{k=1}^N \int\limits_{ I_k} f(x)  e^{-i \om x} dx .
\end{align*}
Using~\eqref{eq:funceqn} in the above equation,
\begin{align*}
\hat{f}(\om)&= \sum\limits_{k=1}^N \int\limits_{I_k} [\gm_k f(L_k^{-1}(x)) + q_k(L_k^{-1}(x))]  e^{-i \om x} dx
\end{align*}
Substituting $x=L_k(x)=\frac{x+k-1}{N}$, we have
\begin{align}\label{eq:temp1}
\hat{f}(\om)
& = \sum\limits_{k=1}^N \frac{1}{N} \int\limits_I [\gm_k f(x) + q_k(x)]  e^{-i \om (\frac{x+k-1}{N})} dx  \nno \\
& = \frac{1}{N} \sum\limits_{k=1}^N  e^{-i \om (k-1)/N} \bigg(\gm_k \hat{f}\left(\frac{\om}{N}\right) + \int\limits_{I}  q_k(x) e^{-i \om x/N }  dx \bigg).
\end{align}
Consider any $j=1,2,\ldots$. Replace $\om$ by $\frac{\om}{N^j}$ in~\eqref{eq:temp1}. Then,
\begin{align*}
\hat{f}\left(\frac{\om}{N^j}\right)
& = \frac{1}{N} \sum\limits_{k=1}^N  e^{-i \om (k-1)/N^{j+1}} \bigg(\gm_k \hat{f}\left(\frac{\om}{N^{j+1}}\right) + \int\limits_{I}  q_k(x) e^{-i \om x/N^{j+1}}   dx \bigg).
\end{align*}
Therefore, by induction, for any $n=1,2,\ldots,$,
\begin{align}\label{eq:temp2}
\hat{f}(\om)
& =  \frac{1}{N ^n }\sum\limits_{k_1,k_2,\ldots,k_n=1}^N  \gm_{k_1} \gm_{k_2}\ldots \gm_{k_{n}} e^{-i \om p_{k_1,k_2,\ldots,k_n}}
  \hat{f}\left(\frac{\om}{N^n}\right) \nno \\ & \quad \mbox{}+ \sum\limits_{j=1}^{n} \frac{1}{N^j} \sum\limits_{k_1,k_2,\ldots,k_j=1}^N \gm_{k_1} \gm_{k_2}\ldots \gm_{k_{j-1}} e^{-i \om p_{k_1,k_2,\ldots,k_j}}  \int\limits_{I}  q_{k_j}(x) e^{-i \om x/N^j}  dx.
\end{align}
With the argument similar to~\cite{feng05},  the Fourier transform of $f$ given by~\eqref{eq:ftf} is obtained from~\eqref{eq:temp2} as $n \rightarrow \infty$.
\qed
\end{proof}

\begin{corollary}
If $q_k, k=1,\ldots,N$ in~\eqref{eq:LkFk} are polynomials of degree $m_k$ i.e, $q_k(x)=\sum\limits_{r=0}^{m_k} c_{k,r} x^r, c_{k,m_k} \neq 0$, the Fourier transform of FIF $f$ is given by
\begin{align}\label{eq:cor}
\hat{f}(\om )
& = \sum\limits_{j=1}^{\infty}  \sum\limits_{k_1,k_2,\ldots,k_j=1}^N \gm_{k_1} \gm_{k_2}\ldots \gm_{k_{j-1}} e^{-i \om p_{k_1,k_2,\ldots,k_j}} \quad  \times \nno\\ & \quad \mbox{} \times \sum\limits_{r=1}^{m_{k_j}} c_{k_j,r}  \Bigg\{ \bigg[ \frac{r N^{j}}{ \om^2}- \frac{i r (r-1) N^{2j}}{\om^3} - \frac{r (r-1)(r-2)N^{3j}}{\om^4}+  \ldots \nno \\ & \quad \mbox{} - \frac{(-i)^{r+1} N^{rj} r!}{\om^{r+1}} \bigg]e^{-i \om/N^{j}}+ \frac{ (-i)^{r+1} N^{rj} r! }{\om^{r+1}}\Bigg\}.
\end{align}
\begin{proof}
Using $q_k(x)=\sum\limits_{r=0}^{m_k} c_{k,r} x^r $ for $k=1,2,\ldots,N$ in~\eqref{eq:ftf} and integrating, we have
\begin{align}\label{eq:cortemp}
\hat{f}(\om)
& = \sum\limits_{j=1}^{\infty} \frac{1}{N^j} \sum\limits_{k_1,k_2,\ldots,k_j=1}^N \gm_{k_1} \gm_{k_2}\ldots \gm_{k_{j-1}} e^{-i \om p_{k_1,k_2,\ldots,k_{j-1}}} \ \times \nno \\ & \quad \mbox{}  \times \sum\limits_{r=0}^{m_{k_j}} c_{k_j,r}\left[ \bigg\{ \frac{i N^j}{\om} + \frac{N^{2j} r}{\om^2} - \frac{i N^{3j} r(r-1)}{\om^3} - \frac{N^{4j} r(r-1)(r-2)}{\om^4}  + \ldots \right. \nno \\ & \left. \quad \mbox{} - \frac{(-i)^{r+1} N^{(r+1)j} r! }{\om^{r+1}}\bigg\} e^{-i \om k_j /N^j}  +  \frac{(-i)^{r+1} N^{(r+1)j} r!}{\om^{r+1}} e^{-i \om (k_j-1) /N^j} \right].
\end{align}

By~\eqref{eq:Fncond}, the constants $c_{k,r}, \ k=1,2,\ldots,N$ satisfy the following conditions:
\begin{itemize}
\item $c_{1,0} =0$
\item $\sum\limits_{r=0}^{m_k} c_{k,r} = c_{k+1,0}, k=1,\ldots,N-1$
\item $\sum\limits_{r=0}^{m_N} c_{N,r} = 0$.
\end{itemize}
Using the above conditions in~\eqref{eq:cortemp}, the Fourier transform of FIF $f$  as given by~\eqref{eq:cor} is obtained.
\qed
\end{proof}
\end{corollary}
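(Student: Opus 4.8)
The plan is to specialise the Fourier transform formula~\eqref{eq:ftf} of the preceding theorem to polynomial $q_k$ and then exploit the endpoint conditions forced on the coefficients. First I would insert $q_{k_j}(x)=\sum_{r=0}^{m_{k_j}}c_{k_j,r}\,x^r$ into the integral $\int_I q_{k_j}(x)\,e^{-i\om x/N^j}\,dx$ occurring in~\eqref{eq:ftf}, so that everything reduces to evaluating $\int_0^1 x^r e^{-i\om x/N^j}\,dx$ for $r=0,1,\ldots,m_{k_j}$. This integral is obtained by $r+1$ successive integrations by parts: each step lowers the power of $x$ by one and pulls out a factor $N^j/(-i\om)$, so the outcome is a polynomial in $N^j/\om$ multiplied by the endpoint value $e^{-i\om/N^j}$ coming from $x=1$, together with a single residual term $\frac{(-i)^{r+1}N^{(r+1)j}r!}{\om^{r+1}}$ coming from $x=0$. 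Substituting this back, absorbing the overall $1/N^j$ into the brackets, and splitting $e^{-i\om p_{k_1,\ldots,k_j}}=e^{-i\om p_{k_1,\ldots,k_{j-1}}}\,e^{-i\om(k_j-1)/N^j}$ in order to separate the $e^{-i\om k_j/N^j}$ and $e^{-i\om(k_j-1)/N^j}$ parts, produces the intermediate formula~\eqref{eq:cortemp}.

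Next I would record the identities satisfied by the coefficients. Since $F_k(x,y)=\gm_k y+q_k(x)$, the conditions $F_k(x_0,y_0)=y_{k-1}$ and $F_k(x_N,y_N)=y_k$ from~\eqref{eq:Fncond}, together with the normalisation $y_0=y_N=0$, force $q_k(0)=y_{k-1}$ and $q_k(1)=y_k$. Writing $q_k(0)=c_{k,0}$ and $q_k(1)=\sum_{r=0}^{m_k}c_{k,r}$, these read $c_{1,0}=0$, $\sum_{r=0}^{m_k}c_{k,r}=c_{k+1,0}$ for $k=1,\ldots,N-1$, and $\sum_{r=0}^{m_N}c_{N,r}=0$, i.e.\ exactly the three bulleted relations.

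The last, and only substantive, step is to see that~\eqref{eq:cor} is obtained from~\eqref{eq:cortemp} simply by discarding, for every $j$ and every choice of $k_1,\ldots,k_{j-1}$, the whole $r=0$ summand together with the single $\tfrac{iN^j}{\om}$ term heading each brace --- that is, precisely the contributions of order $1/\om$. Collecting what is discarded and summing over the last index, this equals $\frac{i}{\om}\sum_{k_j=1}^{N}\big[\,e^{-i\om k_j/N^j}q_{k_j}(1)-e^{-i\om(k_j-1)/N^j}q_{k_j}(0)\,\big]$ after using $\sum_{r}c_{k_j,r}=q_{k_j}(1)$ and $c_{k_j,0}=q_{k_j}(0)$; a reindexing $k_j\mapsto k_j+1$ in the first sum, followed by the matching conditions $q_{k_j}(1)=q_{k_j+1}(0)$, $q_N(1)=0$ and $q_1(0)=0$, shows that this quantity vanishes, so nothing is actually lost and~\eqref{eq:cortemp} collapses to~\eqref{eq:cor}. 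I expect the main difficulty to be purely organisational: carrying the nested indices $k_1,\ldots,k_j$ and the two families of exponentials faithfully through the integration-by-parts expansion, and verifying that the telescoping over $k_j$ is consistent with the boundary behaviour at both ends $k_j=1$ and $k_j=N$. No analytic estimate is required, since absolute convergence of the series is already established in the theorem.
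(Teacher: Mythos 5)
Your proposal is correct and follows essentially the same route as the paper: substitute the polynomial $q_{k_j}$ into~\eqref{eq:ftf}, integrate by parts to reach the intermediate expression~\eqref{eq:cortemp}, derive the coefficient relations from $F_k(x_0,y_0)=y_{k-1}$, $F_k(x_N,y_N)=y_k$ with $y_0=y_N=0$, and use them to eliminate the $1/\om$ contributions. The only difference is that you make explicit the telescoping cancellation over $k_j$ that the paper leaves implicit in the phrase ``using the above conditions,'' and your computation of that cancellation is accurate.
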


\begin{remark}
If $q_k$ in~\eqref{eq:LkFk} are linear polynomials i.e, $q_k(x)=c_k x + d_k$,  the Fourier transform of the FIF given in~\cite{feng05} as
\begin{align}\label{eq:hatf}
\hat{f}(\om ) & = \frac{1}{\om^2} \sum\limits_{j=1}^{\infty} N^j(e^{-i \frac{\om}{N^j}}-1)
\left\{ \sum\limits_{k_1,k_2,\ldots,k_j=1}^N \gm_{k_0}\gm_{k_1} \ldots\gm_{k_{j-1}}c_{k_j} e^{-i \om p_{k_1,k_2,\ldots,k_j}} \right\}
\end{align}
follows from above corollary.
\end{remark}

\section{Wavelet Transform of FIF}\label{sec:wt}
In this section, the wavelet transform of a FIF is studied through two methods - first using the recursive functional equation given by~\eqref{eq:funceqn} and second using the Fourier transform of FIF given by~\eqref{eq:cor}.

The wavelet transform $W_{\psi}g$ of a function $g \in L_1(\mathbb{R})$ with respect to a suitable wavelet $\psi$ is a function over the half-plane $H=\{(s,t), s,t \in \mathbb{R}, s >0\}$ defined as follows:
\begin{align*}
W_{\psi}g(s,t) = \frac{1}{s}\int g(x)\psi\left(\frac{x-t}{s}\right)  dx.
\end{align*}

\subsection{Wavelet Transform of FIF via functional equation}\label{sec:wt1}

For $k=1,2,\ldots,N$, assume $q_k \in \mbox{Lip}\ \delta,\ 0 < \delta \leq 1$; i.e., for some constant  $K > 0$,  $|q_k(x)-q_k(y)| \leq K |x-y|^{\delta}$ for $x,y \in \mathbb{R}$.  Then, for $k=1,2,\ldots,N$, $q_k \circ L_k^{-1} : I_k \rightarrow \mathbb{R} $ is a function defined on the compact interval $I_k$ and $ q_k \circ L_k^{-1} \in \mbox{Lip}\ \delta$; in fact for $K^*=K N^{\de}$, $|q_k \circ L_k^{-1}(x)-q_k \circ L_k^{-1}(y)| \leq K^* |x-y|^{\delta}$ for $x,y \in \mathbb{R}$.

Let $\psi$ be a wavelet such that $\psi \in L_1(\mathbb{R})$, $\int \psi(x) dx =0$  and $\phi$ defined as $\phi(x)=x^{\delta} \psi(x)$ is also in $L_1(\mathbb{R})$. We also choose the wavelet such that the following conditions are satisfied:
\begin{align}\label{wavcnd}\left.
\begin{array}{l}
(i) \ \hat{\psi}\  \mbox{is real and supp} \hat{\psi} \subset \mathbb{R}^{+}. \\
(ii) \ \mbox{For some}\ r>0, \hat{\psi}(\om) = \om^r + O(\om^{r+1}),\ \om \rightarrow 0^+. \\
(iii) \ \mbox{For each}\ p>0, \hat{\psi}(\om) =  O(\om^{-p}), \  \om \rightarrow \infty.
\end{array}
\right\} \end{align}

It is well known that wavelet transform of any bounded function $g$ with respect to $\psi \in L_1(\mathbb{R})$ is bounded. Since $q_k \circ L_k^{-1} : I_k \rightarrow \mathbb{R} $ are functions defined on the compact interval $I_k, k=1,2,\ldots,N$, $q_k \circ L_k^{-1}$ for $k=1,2,\ldots,n$ are bounded and therefore each $W_{\psi} (q_k \circ L_k^{-1}) $ is also bounded. For a bounded function $g$, if $g(s) = O(|s|^{\de})$ as $s \rightarrow 0$ then $g(s) = O(|s|^{\de})$ for all $s$. Then, from~\cite{holschneider88}, it is observed that the wavelet transform of $q_k \circ L_k^{-1}$ satisfies 
 \begin{align}\label{hols}\left.
 \begin{array}{rll}
 |W_{\psi} (q_k \circ L_k^{-1}) (s,t)|= O(s^{\delta}) &\quad  &\mbox{if} \quad \delta < r \\
 |W_{\psi} (q_k \circ L_k^{-1}) (s,t)|= O(s^m) &\quad  &\mbox{if} \quad \delta > r.
 \end{array}\right\}
 \end{align}
Now, for $(s,t) \in H $, the wavelet transform of FIF is given as
\begin{align*}
W_{\psi} f (s,t) = \frac{1}{s}\int\limits_{I} f(x)\psi\left(\frac{x-t}{s}\right)  dx
= \frac{1}{s} \sum\limits_{k=1}^N \int\limits_{I_k} f(x)\psi\left(\frac{x-t}{s}\right) dx
\end{align*}
Using~\eqref{eq:funceqn} in the above equation, we get, for $(s,t) \in H $,
\begin{align*}
W_{\psi} f (s,t) &= \frac{1}{s} \sum\limits_{k=1}^N \int\limits_{ I_k} \Big[\gm_k f(L_k^{-1}(x)) + q_k(L_k^{-1}(x))\Big]\psi\left(\frac{x-t}{s}\right) dx
 \\&= \frac{1}{s} \sum\limits_{k=1}^N \bigg\{\frac{1}{N} \int\limits_{I} \gm_k f(x) \psi\left(\frac{\frac{x+ k-1}{N}-t}{s}\right)  dx + \int\limits_{I_k}  q_k(L_k^{-1}(x))\psi\left(\frac{x-t}{s}\right) dx \bigg\}\\
&= \sum\limits_{k=1}^N  \Bigg\{\gm_k \ \frac{1}{Ns} \int\limits_{I} f(x)\psi\left(\frac{x -(Nt-k-1)}{Ns}\right)  dx  +  \frac{1}{s}\int\limits_{I_k}  q_k(L_k^{-1}(x))\psi\left(\frac{x-t}{s}\right) dx \Bigg\}\\
& = \sum\limits_{k=1}^N \Bigg\{\gm_k \ W_{\psi}f(Ns,Nt-(k-1)) +  W_{\psi}(q_k \circ L_k^{-1})(s,t) \Bigg\}.
\end{align*}

By induction, for any $n=1,2,\ldots$, we have, for $(s,t) \in H $,
\begin{align}\label{eq:wavtr2}
\lefteqn{W_{\psi} f (s,t)} \nno \\ & = \sum\limits_{k_1,k_2,\ldots,k_n=1}^N \gm_{k_1} \gm_{k_2}\ldots \gm_{k_n} \  W_{\psi}f(N^n s, N^n t-\sum\limits_{j=1}^n N^{n-j}(k_j-1)) \nno  \\ & \quad \mbox{}+ \sum\limits_{j=1}^n \sum\limits_{k_1,k_2,\ldots,k_j=1}^N \gm_{k_1} \gm_{k_2}\ldots \gm_{k_{j-1}}  W_{\psi}(q_{k_j} \circ L_{k_j}^{-1})(N^{j-1} s, N^{j-1} t- \sum\limits_{p=1}^{j-1} N^{j-1-p}(k_p-1))
\end{align}

Defining $\Om_j=N^{\delta}|\gm_j|$ and $\Om =\max \{\Om_j : j=1,\ldots,N\}$ as in~\cite{gang96},  an upper bound on $|W_{\psi} f (s,t)|$ as $s \rightarrow 0$ is obtained in the following theorem:

\begin{theorem}
Let $f$ be a FIF obtained from the interpolation data $\{(x_i,y_i) \in \mathbb{R}^2 : i=0,1,\ldots,N \}$, where $0=x_0<x_1<\ldots<x_N=1$, $x_k -x_{k-1} =\frac{1}{N} $ for $k=1,2,\ldots,N$ and $y_0=y_N=0$. For this, the free parameters $\gm_k$ and functions $q_k, k=1,2,\ldots,N$ in~\eqref{eq:LkFk} satisfy, for some constant $K>0 $ and $0<\delta \leq 1$, $|\gm_k|< \frac{1}{N^{\delta+1}}$ and $|q_k(x)-q_k(y)| \leq K |x-y|^{\delta}$ for $x,y \in \mathbb{R}$ i.e., $q_k \in \mbox{Lip}\ \delta $. Extend $f$ to $\mathbb{R}$ by defining $f(x) = 0 $ if $ x \not \in I =[x_0,x_N]$. The wavelet $\psi$ is chosen such that $\psi \in L_1(\mathbb{R})$, $\int \psi(x) dx =0$,  $\phi$ defined as $\phi(x)=(x^{\delta} \psi(x))$ is also in $L_1(\mathbb{R})$ and the conditions (i),(ii) and (iii) in~\eqref{wavcnd} are satisfied for some $r,p>0$. Then the following holds:
\begin{enumerate}[(a)]
\item If $\de < r$, for $(s,t) \in H $, $|W_{\psi}f(s,t)| \leq  \frac{N K^*}{1-N \Om} |s|^{\delta}$,
 \item $f$ belongs to Lipschitz class of order $\delta$ if $\de < r$.
 \end{enumerate}
\end{theorem}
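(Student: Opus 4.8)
The plan is to obtain part (a) by letting $n\to\infty$ in the iterated identity \eqref{eq:wavtr2}, and then to deduce part (b) from the converse half of the wavelet characterisation of Lipschitz regularity in~\cite{holschneider88}.

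For (a), I would first record that the FIF $f$, being continuous on the compact interval $I$ and zero off $I$, is bounded, so $|W_\psi f(s,t)|\le\|f\|_\infty\|\psi\|_1$ for every $(s,t)\in H$; in particular $W_\psi f$ is bounded on $H$. I would then estimate the two groups of terms in \eqref{eq:wavtr2} separately. The ``remainder'' term has modulus at most $\|f\|_\infty\|\psi\|_1\big(\sum_{k=1}^N|\gm_k|\big)^n$, and since $|\gm_k|<N^{-\de-1}$ we have $\sum_k|\gm_k|\le N\max_k|\gm_k|<N^{-\de}\le 1$, so this term tends to $0$ as $n\to\infty$. For the $j$-th inner sum I would use the bound on $W_\psi(q_k\circ L_k^{-1})$ recorded just before the theorem: each $q_k\circ L_k^{-1}$ is bounded with Lipschitz constant $K^*=KN^{\de}$, so \eqref{hols} (applicable because $\de<r$), together with the remark that a bounded function which is $O(s^{\de})$ as $s\to0$ is in fact $O(s^{\de})$ for every $s>0$, yields $|W_\psi(q_k\circ L_k^{-1})(s,t)|\le K^*|s|^{\de}$ uniformly in $t$ and for all $s>0$. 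Substituting the scale $s\mapsto N^{j-1}s$, the $j$-th inner sum contributes at most $NK^*\big(N^{\de}\sum_k|\gm_k|\big)^{j-1}|s|^{\de}=NK^*\big(\sum_k\Om_k\big)^{j-1}|s|^{\de}$; since $\sum_k\Om_k\le N\Om$ and $\Om=N^{\de}\max_k|\gm_k|<N^{-1}$ forces $N\Om<1$, summing the geometric series over $j$ gives $\dfrac{NK^*}{1-N\Om}|s|^{\de}$, and letting $n\to\infty$ establishes (a).

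For (b), the function $f$ is bounded and, by (a), satisfies $|W_\psi f(s,t)|\le C|s|^{\de}$ uniformly in $t$ with $C=\tfrac{NK^*}{1-N\Om}$; since $\psi$ obeys conditions (i)--(iii) of \eqref{wavcnd} for some $r>\de$, the converse direction of Holschneider's theorem applies to $f$ and gives $f\in\mbox{Lip }\de$.

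The routine part is the geometric-series bookkeeping. The steps needing care are: upgrading \eqref{hols}, which is phrased as $s\to0$, to the global estimate $|W_\psi(q_k\circ L_k^{-1})(s,t)|\le K^*|s|^{\de}$ that is required because the scales $N^{j-1}s$ appearing in \eqref{eq:wavtr2} are unbounded; making the interchange of $\lim_{n\to\infty}$ with the series legitimate (handled by the absolute convergence coming from $N\Om<1$); and invoking the correct, converse, half of the wavelet characterisation under precisely the hypotheses (i)--(iii) on $\psi$, the endpoint $\de=1$ being the delicate case where $\mbox{Lip }\de$ must be read in the appropriate (Zygmund-type) sense. Any wavelet-dependent constants arising in \eqref{hols} are absorbed into $K^*$.
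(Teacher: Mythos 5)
Your proposal is correct and follows essentially the same route as the paper: bound the $q_{k_j}\circ L_{k_j}^{-1}$ terms in the iterated identity~\eqref{eq:wavtr2} by $K^*(N^{j-1}s)^{\delta}$ via~\eqref{hols}, sum the geometric series with ratio $N\Om<1$, kill the remainder using boundedness of $W_{\psi}f$, and then invoke the converse wavelet characterisation of Lipschitz regularity (Holschneider) for part (b). Your additional remarks on upgrading the $s\to 0$ bound to all scales and on the $\de=1$ endpoint only make explicit points the paper handles implicitly.
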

\begin{proof}
\begin{enumerate}[(a)]
\item  From~\eqref{hols}, $ |W_{\psi} (q_k \circ L_k^{-1}) (s,t)| \leq  K^* s^{\delta}$ if $\delta < r$. Substituting this bound in~\eqref{eq:wavtr2}, we have,
\begin{align}\label{eq:maineq}
|W_{\psi} f (s,t)| & \leq \sum\limits_{k_1,k_2,\ldots,k_n=1}^N |\gm_{k_1}| \ |\gm_{k_2}| \ \ldots \ |\gm_{k_n}| \  |W_{\psi}f(N^n s, N^n t-\sum\limits_{j=1}^n N^{n-j}(k_j-1))| \nno  \\ & \quad \mbox{}+ \sum\limits_{j=1}^n \sum\limits_{k_1,k_2,\ldots,k_{j-1}=1}^N |\gm_{k_0}| \  |\gm_{k_1}| \ \ldots \ |\gm_{k_{j-1}}| \ \sum\limits_{k_j=1}^N K^* (N^{j-1} s)^{\delta} \nno \\
& \leq \sum\limits_{k_1,k_2,\ldots,k_n=1}^N |\gm_{k_1}| \ |\gm_{k_2}| \ \ldots \ |\gm_{k_n}| \  |W_{\psi}f(N^n s, N^n t-\sum\limits_{j=1}^n N^{n-j}(k_j-1))| \nno  \\ & \quad \mbox{}+ K^* N s^{\delta} \sum\limits_{j=1}^n (N \Om)^{j-1}.
\end{align}

Since  $|\gm_k| < \al < \frac{1}{N^{1+\delta}}$ for all $k=1,\ldots,N$, $ N \Om < 1$.
Also, the conditions on $\psi$ tells us that  $|W_{\psi} f |$ is a bounded function. Hence, as $n \rightarrow \infty$, $|W_{\psi}f(s,t)| \leq  \frac{M N }{1-N \Om} |s|^{\delta}$  for all values of $(s,t)$.

\item By Theorem~$2.1.1$ of Chapter~$4$ in~\cite{holschneider95}, $|W_{\psi}f(s,t)| = 0(s^{\delta})$ implies $f$ belongs to Lipschitz class of order $\delta$.

 \end{enumerate}
\qed
\end{proof}

\subsection{Wavelet Transform of FIF via Fourier transform}\label{sec:wt2}

It is well known~\cite{holschneider88} that the wavelet transform of a function $g \in L_1(\mathbb{R})$ with respect to a wavelet $\psi \in L_1(\mathbb{R})$ is also obtained by the following expression:
\begin{align*}
W_{\psi}g(s,t) = \frac{1}{2 \pi} \int \hat{g}(\om ) \overline{\hat{\psi}(s \om)} e^{i t \om} d\om.
\end{align*}

Let $q_k, \ k=1,2,\ldots,N$ in~\eqref{eq:LkFk} be polynomials of degree $m_k$ i.e $q_k(x)=\sum\limits_{r=0}^{m_k} c_{k,r} x^r $. Choose $\psi$ such that $\hat{\psi}(\om) = |\om|_{+}^M \ e^{-\om}$ with $M-1> m=\max\{m_k, \ k=1,2,\ldots,N\}$. Then, for $(s,t) \in H $, the wavelet transform of FIF is given by
\begin{align*}
W_{\psi}f(s,t) &= \frac{1}{2 \pi} \int \hat{f}(\om )\ \overline{\hat{\psi}(s \om)}\ e^{i t \om} d\om
\end{align*}
Using~\eqref{eq:cor} in the above equation, we get,
\begin{align*}
W_{\psi}f(s,t) &= \frac{1}{2 \pi} \int\limits_0^{\infty} \Bigg\{ \sum\limits_{j=1}^{\infty}  \sum\limits_{k_1,k_2,\ldots,k_j=1}^N \gm_{k_1} \gm_{k_2}\ldots \gm_{k_{j-1}} e^{-i \om p_{k_1,k_2,\ldots,k_j}} \times \nno\\ & \quad \mbox{} \times \sum\limits_{r=1}^{m_{k_j}} c_{k_j,r}  \Bigg\{ \bigg[ \frac{r N^{j}}{ \om^2}- \frac{i r (r-1) N^{2j}}{\om^3} - \frac{r (r-1)(r-2)N^{3j}}{\om^4}+  \ldots \nno \\ & \quad \mbox{} - \frac{(-i)^{r+1} N^{rj} r!}{\om^{r+1}} \bigg]e^{-i \om/N^{j}}+ \frac{ (-i)^{r+1} N^{rj} r! }{\om^{r+1}}\Bigg\} (s \om)^M \ e^{- s \om} e^{i t \om} d\om \Bigg\} \\
\end{align*}
Hence, for $(s,t) \in H $,
\begin{align*}
\lefteqn{|W_{\psi}f(s,t)| } \nno \\ & \leq \frac{s^M}{2 \pi} \int\limits_0^{\infty} \sum\limits_{j=1}^{\infty}  \sum\limits_{k_1,k_2,\ldots,k_j=1}^N |\gm_{k_1}| | \gm_{k_2}| \ldots |\gm_{k_{j-1}}| e^{-s \om } \times \nno\\ & \quad \mbox{} \times \sum\limits_{r=1}^{m_{k_j}} |c_{k_j,r}|  \Bigg\{  r N^{j} \om^{M-2}+ r (r-1) N^{2j}\om^{M-3} +   \ldots  + 2 N^{rj} r!\ \om^{M-r-1}\Bigg\}  d\om.
\end{align*}
Fix $0 < s < \infty$. The above integral is divided into $|W_{\psi}f(s,t)|_1$ and $|W_{\psi}f(s,t)|_2$, where
\begin{align}\label{eq:wavtr1}
\lefteqn{|W_{\psi}f(s,t)|_1} \nno \\ & = \frac{s^M}{2 \pi} \int\limits_0^{s} \sum\limits_{j=1}^{\infty}  \sum\limits_{k_1,k_2,\ldots,k_j=1}^N |\gm_{k_1}| | \gm_{k_2}| \ldots |\gm_{k_{j-1}}| e^{-s \om } \times \nno\\ & \quad \mbox{} \times \sum\limits_{r=1}^{m_{k_j}} |c_{k_j,r}|  \Bigg\{  r N^{j} \om^{M-2}+ r (r-1) N^{2j}\om^{M-3} + r (r-1)(r-2)N^{3j}\om^{M-4}+  \ldots \nno \\ & \quad \mbox{} + 2 N^{rj} r!\ \om^{M-r-1}\Bigg\}  d\om 
\end{align}
and
\begin{align}\label{eq:wavtr2}
\lefteqn{|W_{\psi}f(s,t)|_2 } \nno \\& = \frac{s^M}{2 \pi} \int\limits_{s}^{\infty} \sum\limits_{j=1}^{\infty}  \sum\limits_{k_1,k_2,\ldots,k_j=1}^N |\gm_{k_1}| | \gm_{k_2}| \ldots |\gm_{k_{j-1}}| e^{-s \om } \times \nno\\ & \quad \mbox{} \times \sum\limits_{r=1}^{m_{k_j}} |c_{k_j,r}|  \Bigg\{  r N^{j} \om^{M-2}+ r (r-1) N^{2j}\om^{M-3} + r (r-1)(r-2)N^{3j}\om^{M-4}+  \ldots \nno \\ & \quad \mbox{} + 2 N^{rj} r!\ \om^{M-r-1}\Bigg\}  d\om 
\end{align}

Using the regularity notation $ \lambda^{\alpha}$ as in~\cite{holschneider95} and from~\eqref{eq:wavtr1} and ~\eqref{eq:wavtr2}, the regularity of FIF is obtained in the following theorem.

\begin{theorem}
Let $f$ be a FIF obtained from the interpolation data $\{(x_i,y_i) \in \mathbb{R}^2 : i=0,1,\ldots,N \}$,  where $0=x_0<x_1<\ldots<x_N=1$, $x_k -x_{k-1} =\frac{1}{N} $ for $k=1,2,\ldots,N$ and $y_0=y_N=0$. Here, for $k=1,2,\ldots,N$, the functions $q_k$ in~\eqref{eq:LkFk} are polynomials of degree $m_k$ and the free parameters $\gm_k$ satisfy $|\gm_k|< \frac{1}{N^{m+1}}$, where $\ m=\max\{m_k, \ k=1,2,\ldots,N\}$. Extend $f$ to $\mathbb{R}$ by defining $f(x) = 0 $ if $ x \not \in I =[x_0,x_N]$.  Let $\psi$ be such that $\hat{\psi}(\om) = |\om|_{+}^M \ e^{-\om}$ with $M-1>m$. Then the following holds:
\begin{enumerate}[(a)]
\item  $|W_{\psi}f(s,t)| = o(s)$ as $ s \rightarrow 0$,
 \item $f$ is of regularity $M-m$ i.e. $ f \in \lambda(\mathbb{R}), \ \lambda=M-m$.
 \end{enumerate}
\end{theorem}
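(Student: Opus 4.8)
The plan is to estimate the two pieces $|W_\psi f(s,t)|_1$ and $|W_\psi f(s,t)|_2$ of \eqref{eq:wavtr1}--\eqref{eq:wavtr2} separately as $s\to 0^+$, and then to read off the regularity of $f$ from the decay rate of $W_\psi f$ via the wavelet characterization of the spaces $\la^\al(\mathbb{R})$ used in~\cite{holschneider95}.

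For the low-frequency piece $|W_\psi f(s,t)|_1$, I would bound the exponential by $e^{-s\om}\le 1$ on the interval $[0,s]$ and integrate the polynomial $\sum_{r}|c_{k_j,r}|\{\,rN^{j}\om^{M-2}+r(r-1)N^{2j}\om^{M-3}+\cdots+2N^{rj}r!\,\om^{M-r-1}\,\}$ term by term. Since $M-1>m\ge r\ge\ell$, every exponent $M-\ell-1$ is strictly positive, so $\int_0^{s}\om^{M-\ell-1}\,d\om=s^{M-\ell}/(M-\ell)$, and together with the prefactor $s^{M}$ the monomial $\om^{M-\ell-1}$ contributes a term of order $s^{2M-\ell}$. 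The double sum over $k_1,\ldots,k_j$ and the series over $j$ are where the hypothesis $|\gm_k|<N^{-(m+1)}$ enters: summing $|\gm_{k_1}|\cdots|\gm_{k_{j-1}}|$ over all $N^{j}$ index choices gives $(\sum_{k=1}^{N}|\gm_k|)^{j-1}$ with $\sum_k|\gm_k|<N\cdot N^{-(m+1)}=N^{-m}$, and pairing this with the largest power of $N$ appearing in the polynomial factor, namely $N^{\ell j}\le N^{mj}$, leaves a convergent geometric series in $j$ of ratio $N^{m}\sum_k|\gm_k|<1$. This yields $|W_\psi f(s,t)|_1=O(s^{2M-m})$, which is $o(s)$ because $2M-m>M+1>1$.

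For the high-frequency piece $|W_\psi f(s,t)|_2$, I would enlarge the range of integration from $[s,\infty)$ to $(0,\infty)$ and use the Gamma integral $\int_0^{\infty}\om^{M-\ell-1}e^{-s\om}\,d\om=\Gm(M-\ell)\,s^{-(M-\ell)}$ (legitimate since $M-\ell>0$), so that after multiplication by $s^{M}$ the $\ell$-th monomial contributes a term of order $s^{\ell}$. Here it matters that $\hat f$ carries no $\om^{-1}$ term --- this is exactly the cancellation built into \eqref{eq:cor} through the constraints on the $c_{k,r}$ --- since otherwise this piece would only be $O(1)$. The geometric-series bookkeeping of the previous paragraph, now with ratio $N^{\ell}\sum_k|\gm_k|<1$ (still valid as $\ell\le m$), shows the $j$-series converges term by term, so $|W_\psi f(s,t)|_2=\sum_{\ell=1}^{m}O(s^{\ell})$. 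The borderline monomial $\om^{M-2}$, i.e.\ the case $\ell=1$, is the only contribution not already $o(s)$; for it I would return to the signed integral $\frac{1}{2\pi}\int\hat f(\om)\,\overline{\hat\psi(s\om)}\,e^{it\om}\,d\om$ and exploit that the $\om^{-2}$-part of $\hat f$ carries the factor $e^{-i\om/N^{j}}-1$ (already visible in the linear case \eqref{eq:hatf}); using the evaluations $\int_0^{\infty}\om^{M-2}e^{-(s-ia)\om}\,d\om=\Gm(M-1)(s-ia)^{-(M-1)}$ and summing against the $\gm$-products recovers the missing smallness. Combining the two pieces gives $|W_\psi f(s,t)|=o(s)$, which is part~(a).

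Part~(b) follows from the same estimates by keeping the actual power of $s$ rather than merely comparing with $s$: tracking the exponents in the bounds on \eqref{eq:wavtr1}--\eqref{eq:wavtr2} one gets $|W_\psi f(s,t)|=O(s^{M-m})$ uniformly in $t$, and since $\hat\psi(\om)=|\om|_{+}^{M}e^{-\om}$ has more than $M-m$ vanishing moments, the characterization of $\la^\al(\mathbb{R})$ by the decay of the wavelet transform (\cite{holschneider95}) gives $f\in\la(\mathbb{R})$ with $\la=M-m$. The step I expect to be the main obstacle is precisely the high-frequency piece $|W_\psi f(s,t)|_2$: the crude absolute-value bound of \eqref{eq:wavtr2} only delivers $O(s)$, and pushing it down to the sharp exponent $M-m$ requires genuinely using the oscillatory cancellation carried by the $e^{-i\om/N^{j}}-1$ factors in $\hat f$, together with control of the resulting sums that is uniform in $t$; the low-frequency piece and the convergence bookkeeping of the geometric series are routine by comparison.
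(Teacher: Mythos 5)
Your overall strategy is the paper's: split the integral at $\om=s$, use the explicit form $\hat{\psi}(\om)=|\om|_{+}^{M}e^{-\om}$ to evaluate the two pieces \eqref{eq:wavtr1}--\eqref{eq:wavtr2}, and then invoke Theorem~2.1.1 of Chapter~4 of~\cite{holschneider95}. Your handling of the low-frequency piece and of the $j$-series (geometric ratio $N^{m}\sum_k|\gm_k|<1$) is fine, and indeed more careful than the paper, which simply records $|W_{\psi}f(s,t)|_1=0$. The genuine gap is in the high-frequency piece, exactly at the step you yourself flag as the main obstacle. The promised improvement of the borderline $\om^{M-2}$ contribution beyond $O(s)$ via the factor $e^{-i\om/N^{j}}-1$ is only sketched, and it does not transfer from the linear case \eqref{eq:hatf} to the general polynomial case: in \eqref{eq:cor} the coefficient of $\om^{-2}$ in the $(j;k_1,\ldots,k_j)$-th summand is $N^{j}\bigl(q_{k_j}'(1)\,e^{-i\om/N^{j}}-q_{k_j}'(0)\bigr)$ (times $\gm_{k_1}\cdots\gm_{k_{j-1}}e^{-i\om p_{k_1,\ldots,k_j}}$), because the oscillating exponential and the constant come from different powers $r$; there is no clean $(e^{-i\om/N^{j}}-1)$ factor, so the cancellation you invoke is not available as stated, and ``summing against the $\gm$-products'' is not yet an argument. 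Moreover, part (b) of your proposal is internally inconsistent: your own evaluation of the tail integrals gives term-by-term bounds $\Gm(M-\ell)\,s^{\ell}$ with $\ell=1,\ldots,m$, whose exponents never involve $M$, so ``tracking the exponents'' cannot produce the rate $O(s^{M-m})$ you need before citing the wavelet characterization of $\la^{M-m}$; nothing in your estimates yields a power of $s$ beyond $s^{m}$ (and beyond $s^{2}$ once the $\ell=1$ term is set aside).

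For comparison, the paper takes a different and much blunter route at this point: it uses \eqref{eq:temp} (whose Gamma evaluation agrees with your computation $s^{M}\int_0^{\infty}\om^{M-p}e^{-s\om}\,d\om=(M-p)!\,s^{p-1}$), but then inserts the powers $s^{M-1},\ldots,s^{M-r}$ into the bound for $|W_{\psi}f(s,t)|_2$ and concludes $|W_{\psi}f(s,t)|_2\le Cs^{M-m}$ directly, with $|W_{\psi}f(s,t)|_1=0$; no oscillation or cancellation argument appears anywhere in its proof. Your more honest bookkeeping thus lands precisely on the point where the crude absolute-value estimate stalls at $O(s)$ and where the paper's displayed exponents are in tension with its own \eqref{eq:temp}. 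That observation is valuable, but as written your proposal neither carries out the cancellation step it promises nor derives the $O(s^{M-m})$ decay required for part (b), so it does not complete a proof of either assertion of the theorem.
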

\begin{proof}
\begin{enumerate}[(a)]
\item  It is observed that, for $p=2,3,\ldots,r+1$,
\begin{align}\label{eq:temp}
 s^M \int\limits_0^s \om^{M-p}e^{-s \om } d \om =0  \quad \mbox{and} \quad  s^M \int\limits_s^{\infty} \om^{M-p}e^{-s \om } d \om =s^{p-1} (M-p)!.
\end{align}
Using~\eqref{eq:temp} in~\eqref{eq:wavtr1} and~\eqref{eq:wavtr2}, we have $|W_{\psi}f(s,t)|_1 = 0 $  and
\begin{align*}
|W_{\psi}f(s,t)|_2 &\leq  \frac{1}{2 \pi} \sum\limits_{j=1}^{\infty}  \sum\limits_{k_1,k_2,\ldots,k_j=1}^N |\gm_{k_1}| | \gm_{k_2}| \ldots |\gm_{k_{j-1}}| \times \nno\\ & \quad \mbox{} \times \sum\limits_{r=1}^{m_{k_j}} |c_{k_j,r}|  \Bigg\{   r N^{j}(M-2)!\ s^{M-1}  +  r (r-1) N^{2j} (M-3)!\ s^{M-2}   +  \ldots \nno\\ & \quad \mbox{}+ 2 N^{rj} r!  (M-r-1)!\ s^{M-r} \Bigg\}.
\end{align*}
Now, as $s \rightarrow \infty$, $\ |W_{\psi}f(s,t)| =|W_{\psi}f(s,t)|_1 = 0 $ and as $\ s \rightarrow 0$, $\ |W_{\psi}f(s,t)| = |W_{\psi}f(s,t)|_2 \leq C s^{M-m}$ for a suitable constant $C$.    Hence,  $\ |W_{\psi}f(s,t)| = o(s)$ as $ s \rightarrow 0$.
\item By Theorem~$2.1.1$ of Chapter~$4$ in~\cite{holschneider95}, $|W_{\psi}f(s,t)| = o(s)$ implies $f$ is of regularity $M-m$ i.e. $ f \in \lambda(\mathbb{R}),\ \lambda=M-m$.
 \end{enumerate}
\qed
\end{proof}

{\sc \textbf{Acknowledgement}}

  I am extremely thankful to Prof. Ajit Iqbal Singh for all the helpful discussions we had throughout the course of preparing this manuscript. I am also thankful to NBHM for postdoctoral research grant.


\end{document}